    \setlist{noitemsep,leftmargin=*}
	\setlist[1]{labelindent=\parindent}
	\setlist[itemize]{label=\raisebox{0.3ex}{\tiny\textbullet}}
	\setlist[enumerate]{label*=\arabic*.,labelsep=*,leftmargin=*}
	\setlist[enumerate,1]{ref=\arabic*}
	\setlist[enumerate,2]{ref=\theenumi.\arabic*}
	\setlist[enumerate,3]{ref=\theenumii.\arabic*}
	\setlist[enumerate,4]{ref=\theenumiii.\arabic*}
    \crefname{section}{Section}{Sections}
    \crefname{subsection}{Subsection}{Subsections}
    \crefname{subsubsection}{Subsubsection}{Subsubsections}
\theoremstyle{plain}
\newtheorem{theorem}{Theorem}[section]
\newtheorem{proposition}[theorem]{Proposition}
\newtheorem{lemma}[theorem]{Lemma}
\theoremstyle{remark}
\newtheorem{question}{Question}
\theoremstyle{definition}
\newtheorem{definition}[theorem]{Definition}
\DeclareTextFontCommand{\defemph}{\bfseries}
\newcommand{\N}{\mathbb{N}}
\newcommand{\Z}{\mathbb{Z}}
\newcommand{\R}{\mathbb{R}}
\newcommand{\Sphere}{\mathbb{S}}
\newcommand{\set}[2]{\left\{{#1}\;\middle\vert\;{#2}\right\}}
\newcommand{\inv}{^{-1}}
\newcommand{\func}[5]{%
{#1}\colon {#2} &\longrightarrow {#3} \\%
{#4} &\longmapsto #5%
}
\DeclareMathOperator{\Int}{int}
\renewcommand{\setminus}{\smallsetminus}
\newcommand{\abs}[1]{\left| #1\right|}
\newcommand{\nor}[1]{\left\Vert #1\right\Vert}
\newcommand{\Switch}{\Sigma}
\newcommand{\Tange}{\Switch^{\mathrm{t}}}
\newcommand{\Cross}[1][]{\Switch^{\mathrm{c#1}}}
\newcommand{\Slide}[1][]{\Switch^{\mathrm{#1s}}}
\newcommand{\wt}{\widetilde}
\title{Characterization of non-deterministic chaos in two-dimensional non-smooth vector fields}
\author{%
Rodrigo D. Euz\'ebio\\
    \href{mailto:euzebio@ufg.br}{euzebio@ufg.br}\\
    Instituto de Matemática e Estatística, IME-UFG, Goi\^{a}nia-GO, Brazil.\and
Pedro G. Mattos\\
    \href{mailto:pedrogmattos@ime.unicamp.br}{pedrogmattos@ime.unicamp.br}\\
    Departamento de Matem\'atica, Estat\'istica e Computa\c c\~ao Cient\'ifica, IMECC-UNICAMP, Campinas-SP, Brazil.\and
R\'egis Var\~{a}o\\
    \href{mailto:varao@unicamp.br}{varao@unicamp.br}\\
    Departamento de Matem\'atica, Estat\'istica e Computa\c c\~ao Cient\'ifica, IMECC-UNICAMP, Campinas-SP, Brazil.
}
\begin{document}

\maketitle

\begin{abstract} 

Our context is Filippov systems defined on two-dimensional manifolds having a finite number of tangency points. We prove that topological transitivity is a necessary and sufficient condition for the occurrence of non-deterministic chaos when the Filippov system has non-empty sliding or escaping regions. A fundamental result for continuous flows is the equivalence of topological transitivity and existence of a dense orbit. We prove in our setting that topological transitivity for Filippov systems is indeed equivalent to the existence of a dense Filippov orbit, although, in contrast to the continuous case, we are not able to garantee that the dense orbit implies the existence of a residual set of dense orbits. Finally we prove that, in this context, topological transitivity implies strictly positive topological entropy for the Filippov system. This calculation is made using techniques similar to those from symbolic dynamics.
\end{abstract}



\section{Introduction}
\label{sec:introduction}

It is widely known that ordinary differential equations (ODEs) are one of the most relevant tools for modeling problems emerging from real-world applications. Nevertheless, in the last decades, the assumption that solutions of ODEs may experience a discontinuity has gained traction within the theory of dynamical systems. Some examples of discontinuous ODEs includes control theory, the dynamics of a bouncing ball, foraging predators, the anti-lock braking system (ABS), mechanical devices in which components collide with each other, problems with friction, sliding or squealing, etc. See the referred applications and others in \cite{diBern-relay}, \cite{Brogliato}, \cite{Rossa}, \cite{Dixon}, \cite{Genema},  \cite{Jac-To}, \cite{Kousaka}, \cite{Leine}, \cite{diBernardo-livro} as well as references therein.

Discontinuous ODEs can be treated under different hypotheses, one of the most known being due to Filippov (see \cite{Fi}) assuming that solutions can slide through a discontinuity region according to an \textit{a priori} defined rule. Accordingly, Filippov's solutions may experience non-uniqueness so non-typical behavior eventually emerges in relatively familiar scenarios such as planar phase portraits. Other approaches for dealing with discontinuous ODEs are nicely described in the text of \citeauthor{Cortes}~\cite{Cortes}. In this paper, we refer to ODEs with discontinuities by \textit{Filippov systems} since we assume Filippov's convention.

In this paper we address the problem of determining chaotic behavior of Filippov systems defined on two-dimensional manifolds. A well-accepted definition of chaos in the context of smooth deterministic dynamical systems is due to Devaney. A system which is topologically transitive, sensitive with respect to initial conditions and has density of periodic orbits is, in the classical dynamical system scenario, called chaotic or Devaney chaotic. A refinement of those conditions can be found in \cite{BBCDS}, where it is shown that topological transitivity and density of periodic solutions are sufficient for a system to be chaotic. An analogue of this definition is the one we use for Filippov systems. Since forward orbits of a Filippov system is non-deterministic, in this paper we are in fact considering {\it non-deterministic chaos}, more results on that the reader may see for instance \cite{BCE},  \cite{Colombo2011},  \cite{EJ} and \cite{NPV}. The definition of nondeterministic chaos is presented on Section \cref{sec:preliminaries}.

In this paper we assume that $M$ is a two-dimensional manifold with a Filippov system having a finite number of tangency points. We prove that the Filippov system is topologically transitive if, and only if, there is a dense Filippov orbit (\cref{theo:transitive-equivalence}). We prove on \cref{theo:complete} that a transitive Filippov system with non-empty sliding or escaping regions is non-deterministic chaotic (i.e. dense orbit, dense periodic orbits and sensitive with respect to initial conditions). In other words we are proving that \textbf{transitivity is equivalent to non-deterministic chaos}. But we can obtain even more: we also prove that, in our context, transitivity implies positive entropy (\cref{theo:entropy}). Frequently one uses entropy to determine some level of chaoticity of a given dynamics. That is because entropy is a number which may be understood as measuring the creation of new orbits---more precisely, the exponential growth rate of the number of different orbits of the system---hence a system with many ``genuinely'' distinct orbits (i.e. with positive entropy) might be a complex or ``chaotic'' system.

This paper is organized as follows: \cref{sec:main-results} states the main results of the paper. In \cref{sec:preliminaries}, we present the first concepts and definitions of Filippov systems which are going to be used throughout the paper. In \cref{sec:theo:complete}, we prove \cref{theo:complete}; in \cref{sec:transitive-equivalence}, \cref{theo:transitive-equivalence}; and \cref{theo:entropy} is done in \cref{sec:theo:entropy}.

\section{Main results}
\label{sec:main-results}

In the classical context of continuous flows, topological transitivity is equivalent to the existence of a dense orbit (which is sometimes simply called transitivity). In fact, it is further equivalence to the existence of a residual set of transitive points, that is, points through which a dense orbit passes. 
In order to provide the theory of Filippov systems with a solid foundation, we address topological transitivity for Filippov systems in this paper.

We obtain that the same equivalence of topological transitivity and transitivity holds for a specific setting on two-di\-men\-sional manifolds, but due to the complexity of piecewise smooth systems, it is not as straight-forward to obtain a residual set of transitive points.

\begin{theorem}
\label{theo:transitive-equivalence}
Assume that $M$ is a two-dimensional manifold with a Filippov system having a finite number of tangency points. Then the Filippov system is topologically transitive on $M$ if, and only if, there exists a dense Filippov orbit.
\end{theorem}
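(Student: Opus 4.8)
We adapt Birkhoff's transitivity argument to the Filippov setting.

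For the implication ``$\Leftarrow$'', suppose $x$ admits a dense forward Filippov orbit $\gamma\colon[0,\infty)\to M$, and let $U,V\subseteq M$ be nonempty open sets. Density gives $t_{1}\ge 0$ with $\gamma(t_{1})\in U$. The arc $\gamma([0,t_{1}])$ is compact and nowhere dense (it is a countable concatenation of $C^{1}$ arcs in the surface $M$, hence has empty interior), so $\gamma((t_{1},\infty))\supseteq\gamma([0,\infty))\setminus\gamma([0,t_{1}])$ is still dense and therefore meets $V$ at some $t_{2}>t_{1}$. The Filippov trajectory starting at $\gamma(t_{1})\in U$ thus reaches $V$, which is exactly topological transitivity on $M$; so only ``$\Rightarrow$'' requires work.

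For ``$\Rightarrow$'', fix a countable basis $\{U_{n}\}_{n\ge 1}$ of nonempty open subsets of $M$ (a manifold is second countable), and for nonempty open $W\subseteq M$ set
\[
\Gamma(W)=\{x\in M:\text{some forward Filippov orbit issuing from }x\text{ meets }W\}.
\]
Topological transitivity is equivalent to the assertion that each $\Gamma(U_{n})$ is dense: given nonempty open $W$, transitivity supplies a trajectory from $W$ to $U_{n}$ whose initial point lies in $\Gamma(U_{n})\cap W$, and conversely any point of $\Gamma(U_{n})\cap W$ with $U_{n}\subseteq V$ witnesses a trajectory from $W$ to $V$. Granting that each $\Gamma(U_{n})$ is \emph{open} (the main point), the Baire category theorem---valid since $M$ is a Baire space---shows $\mathcal R=\bigcap_{n}\Gamma(U_{n})$ is a dense $G_{\delta}$. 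To upgrade ``for each $n$, some forward orbit of $x$ meets $U_{n}$'' to ``one forward orbit of $x$ is dense'', let $R$ be the set of points whose maximal forward Filippov orbit is \emph{unique}; by the structure of planar Filippov systems with finitely many tangency points (Section~\ref{preliminares}), forward non-uniqueness can originate only on escaping arcs or at the finitely many tangency points, so $R$ contains a dense $G_{\delta}$, say $R_{0}$. Then $\mathcal G=R_{0}\cap\mathcal R$ is a dense $G_{\delta}$, nonempty by Baire, and for every $x\in\mathcal G$ the unique forward orbit of $x$ meets every $U_{n}$ and is therefore dense, as required.

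It remains to prove that $\Gamma(W)$ is open, a robustness statement for Filippov trajectories under perturbation of the initial condition, and this is where finiteness of the tangency set is essential; I expect it to be the main obstacle. Let $x\in\Gamma(W)$ and pick a forward trajectory $\gamma\colon[0,T]\to M$ with $\gamma(0)=x$ and $\gamma(T)\in W$. Enlarging $T$ slightly and using that the tangency set and the set of crossing instants of $\gamma$ are negligible, we may assume $\gamma(T)$ is a \emph{good} point---a regular point of one of the two regions, or an interior point of a sliding or escaping arc---and that $\gamma([0,T])$ is a finite concatenation of smooth pieces (flow arcs in the two regions and sliding arcs on the switching set) glued at transversal crossings, at endpoints of sliding arcs, or at tangency points. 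For $y$ near $x$ one then builds a shadowing trajectory $\gamma_{y}$ piece by piece: on a flow arc by $C^{r}$ continuous dependence on initial data; at a transversal crossing by the implicit function theorem; at an attracting sliding arc by robustness of absorption onto the arc together with smoothness of the sliding flow; and at a tangency point traversed by $\gamma$ by local robustness of the corresponding fold/escape behavior. Concatenating, $\gamma_{y}$ terminates near $\gamma(T)\in W$, so $\gamma_{y}$ meets $W$ for all $y$ in a neighbourhood of $x$, proving $\Gamma(W)$ open. The delicate part is precisely this patching across the discontinuity set near the finitely many tangency points, where continuous dependence of the ambient field can fail and the Filippov trajectory may branch; ruling out chattering (infinitely many crossings in finite time) and checking that the forward non-uniqueness set is meager are the remaining points, and both are handled using the structure theory recalled in Section~\ref{preliminares}.
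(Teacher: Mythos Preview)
Your Birkhoff-style approach is genuinely different from the paper's, and it has a real gap exactly where you flag the difficulty.

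The step that fails is the claim that the set $R$ of points with a \emph{unique} forward Filippov orbit contains a dense $G_\delta$. When $\Sigma^s\neq\emptyset$, an entire open set of initial conditions is carried into a sliding arc and funneled through the single tangency point $T$ at which that arc terminates; if $T$ borders an escaping arc, or otherwise admits more than one forward Filippov continuation, then that whole open set lies in $R^c$, so $R$ is not dense and $R_0\cap\mathcal R$ can be empty. Without uniqueness you are left with, for each $n$, \emph{some} forward orbit of $x$ meeting $U_n$, and you give no mechanism to splice these into a single dense orbit. The same funneling phenomenon threatens the openness of $\Gamma(W)$: if the witnessing trajectory from $x$ reaches $W$ only after passing through a specific tangency $T$ (for instance by entering an escaping arc there and exiting far away), nearby initial conditions generically miss $T$ and may admit no Filippov orbit to $W$ at all. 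Your deferral of these points, and of the no-chattering issue, to ``the structure theory recalled in Section~\ref{preliminares}'' does not work: that section contains only definitions, not the robustness statements you need.

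The paper avoids both problems by splitting on whether $\Sigma^s\cup\Sigma^e$ is empty. When it is nonempty, the proof invokes Theorem~\ref{theo:complete}, which \emph{exploits} rather than fights the collapse of open sets onto a single sliding point $q_0$ (dually, onto the entrance tangency $T_0$ of an escaping arc): transitivity is used to produce, for a dense family of open sets $V_i$, closed orbit segments based at $q_0$ passing through $V_i$, and these concatenate at $q_0$ into a single dense (indeed periodic-rich) orbit; no shadowing or forward uniqueness is required. When $\Sigma^s\cup\Sigma^e=\emptyset$, the only obstruction to a genuine flow is the finite set $\Sigma^t$, and the paper reduces to the classical continuous-flow transitivity theorem either by multiplying $Z$ by a smooth function vanishing on $\Sigma^t$ (Lemma~\ref{lemma:dense.T}) or by restricting to the open, dense, invariant set $\big(\Sigma^t_\varphi\big)^c$ (Lemma~\ref{lemma:T.nondense}). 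Your Birkhoff argument is salvageable in this second regime, where forward orbits really are unique off a meager set, but in the presence of sliding or escaping a different idea is needed.
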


For continuous flows it is known that on a manifold topological transitivity is equivalent to the existence of a dense orbit and even more, there exists a residual set of dense orbit. One good question that we are not able to prove so far is:

\begin{question}
    In the context of \cref{theo:transitive-equivalence} does the existence of a dense orbit implies the existence of a residual set with dense orbit?
\end{question}

\begin{theorem}
\label{theo:complete}
Let $M$ be a two-dimensional manifold and $Z$ a transitive Filippov system having a finite number of tangency points. If the sliding region is non-empty, then the following statements hold:
	\begin{enumerate}
    \item There is a dense set $\Delta$ such that, for every $x \in \Delta$,
        \begin{enumerate}
        \item there is a dense orbit through $x$;
        \item the periodic orbits through $x$ form a dense set;
        \end{enumerate}
    \item The Filippov system has sensitive dependence on initial conditions.
	\end{enumerate} 
\end{theorem}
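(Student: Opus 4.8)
The plan is to reduce everything to Theorem \ref{theo:transitive-equivalence} plus the extra structure coming from a non-empty sliding or escaping region. Part (i), first bullet, is immediate from Theorem \ref{theo:transitive-equivalence}: transitivity gives a $G_\delta$ set $\Delta_0$ of points through which there is a dense Filippov orbit. The substantive content is the second bullet of (i), density of periodic orbits, and part (ii). I would first exploit the hypothesis that $\Sigma^s$ or $\Sigma^e$ is non-empty: pick an open arc $L$ contained in the interior of a sliding (or escaping) segment. On $L$ the Filippov flow is governed by a single smooth scalar vector field along $\Sigma$, so trajectories that enter $L$ stay pinned to $\Sigma$ until they reach a tangency; in particular two orbits that meet $L$ at the same point coincide forever afterward in forward time. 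This collapsing of orbits on $L$ is the mechanism that forces recurrence to become periodicity.

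The key steps, in order, are as follows. First, using transitivity, show that the sliding arc $L$ is visited by a dense orbit, hence the forward orbit of (almost) every transitive point meets $L$; by the collapsing property the whole $\omega$-limit behaviour is organized by the return dynamics to $L$. Second, define a first-return-type map on $L$: since $L\subset\Sigma^s$, the ``future'' from a point of $L$ is unique, so the return map $\pi\colon \mathrm{dom}(\pi)\subseteq L\to L$ is well defined and, on the complement of finitely many points coming from tangencies, it is a continuous monotone interval map. Third, transitivity of the flow forces $\pi$ (on the relevant invariant subinterval) to be topologically transitive as an interval map; a transitive monotone interval map of a (sub)interval must have dense periodic points — indeed a transitive increasing interval map is conjugate to the identity only in trivial cases, and in the genuinely transitive situation one gets, via the intermediate value theorem, a dense set of points $x$ with $\pi^n(x)=x$. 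Each such periodic point of $\pi$ corresponds to a periodic Filippov orbit, and since the transitive orbit enters $L$ densely, these periodic orbits are dense along the transitive orbit; intersecting with $\Delta_0$ gives the required $G_\delta$ set $\Delta$. Fourth, for sensitivity (ii): with a dense set of periodic orbits and at least one dense orbit, run the standard argument (as in the Banks–Brooks–Cairns–Davis–Stacey proof that transitivity plus dense periodic points implies sensitivity), adapting it to the flow setting and to the fact that near a sliding arc nearby orbits are first squeezed onto $\Sigma$ and then separated by the expanding/contracting return map; one produces a sensitivity constant from the diameter of $M$ and the fact that periodic orbits, being finitely many up to a given period near any point, cannot fill a neighbourhood, so some nearby point's orbit escapes far away.

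The main obstacle I anticipate is the second and third steps: making rigorous the passage from ``the flow is transitive'' to ``the induced return map on the sliding arc $L$ is a transitive interval map with dense periodic points,'' while correctly handling the finitely many tangency points, where the return map may be discontinuous or undefined, and where orbits can leave $\Sigma$. One must check that the set of points of $L$ whose orbit returns to $L$ is large enough (dense, indeed residual in $L$), that the monotonicity of $\pi$ is not destroyed by the tangencies (orbits that hit a tangency and re-enter the sliding region must be tracked, or shown to form a measure-zero/finite exceptional set), and that a periodic point of $\pi$ genuinely closes up into a periodic Filippov orbit rather than, say, running into a tangency. I would isolate this as a lemma: \emph{a topologically transitive continuous monotone map of an interval, minus finitely many points, has a dense set of periodic points}, and then spend most of the effort verifying its hypotheses from the Filippov dynamics. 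Sensitivity should then follow routinely once (i) is in hand.
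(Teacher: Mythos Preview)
There is a genuine gap in your periodic-orbit argument, and the overall route differs from the paper's.

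First a structural point: invoking Theorem~\ref{theo:transitive-equivalence} for the $G_\delta$ set of dense-orbit points is circular here, since in the paper the proof of Theorem~\ref{theo:transitive-equivalence} in the case $M^s\neq\emptyset$ explicitly appeals to Theorem~\ref{theo:complete}.

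The substantive problem is the return-map framework. On a sliding arc $L\subset\Sigma^s$ the forward Filippov orbit of every point slides to the \emph{same} tangency endpoint of the sliding component and thereafter follows the same trajectory; hence the first-return map $\pi\colon L\to L$, whenever it is single-valued, is \emph{constant}, not a transitive interval map. (And if the forward orbit later meets an escaping arc, forward uniqueness fails and $\pi$ is not even single-valued, contrary to your claim.) So the one-dimensional interval-dynamics machinery never gets off the ground. Independently, your proposed lemma---that a transitive monotone interval map, continuous off finitely many points, has dense periodic points---is false: an irrational circle rotation, read on $[0,1)$ with one jump discontinuity, is increasing and minimal yet has no periodic points at all. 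The intermediate-value-theorem argument you sketch requires continuity across exactly the points where you have removed it.

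The paper exploits the same collapsing phenomenon, but in the opposite direction. It fixes a base point $q_0\in\Sigma^s$ together with a neighbourhood $\mathcal U$ all of whose forward orbits are funneled through $q_0$ (Figure~\ref{sliding2-}). It then produces a dense family of open sets $V_i$, each of which flows forward into $\Sigma^s$ (Lemma~\ref{lemma: dense set reaching sliding}), shows that any two sliding points are joined by a Filippov segment (Lemma~\ref{lemma:connecting sliding}), and concludes that for every $i$ there is a closed segment $\gamma_i$ with $\gamma_i(0)=\gamma_i(t_i)=q_0$ meeting $V_i$ (Lemma~\ref{lemma:segment.on.Vi}). Each $\gamma_i$ is then a periodic Filippov orbit through $q_0$, together they are dense, and concatenating them yields a dense orbit through $q_0$. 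The escaping case is handled dually via a fixed tangency $T_0$ (Lemmas~\ref{lemma: dense set reaching escape}--\ref{lemma:segment.on.Wi}). Thus the mechanism is not a return map on an arc but the ability to close up arbitrary transitive excursions at a single base point, which works precisely because the sliding collapse allows many distinct Filippov orbits to share $q_0$.
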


The above result we prove before proceeding to the proof of \cref{theo:transitive-equivalence} and, as questioned before, should we get the set $\Delta$ to be a residual set? After the proof of \cref{theo:complete} on \cref{ssec:comments_residual_set} we make a discussion concerning the issues to get a residual set. 

Our last result shows that topological transitivity (or transitivity) implies positive entropy, which may be seen as well as a measure of chaos. 

\begin{theorem}
\label{theo:entropy}
Assume that M is a two-dimensional manifold with a transitive Filippov system. If the sliding or escaping regions are non-empty, then the Filippov system has positive topological entropy.
\end{theorem}

\section{Preliminaries}
\label{sec:preliminaries}

\subsection{Filippov systems}
\label{ssec:preliminaries-filippov_systems}

Let $M$ be a 2-dimensional $C^k$ closed Riemannian manifold and $\Sigma\subset M$ a set formed by the union of $n$ smooth curves $\Sigma_i$ which are disjoint pairwise, where $\Sigma_i=h_i^{-1}(0)$ with $i=1,\ldots,n$ and $h_i:M\longrightarrow\mathbb{R}$ is a smooth function having $0$ as regular value. We write $\Sigma=\bigcup_{i=1}^n\Sigma_i$ and assume that $\Switch$ splits $M$ into $n+1$ disjoint regions $R_i$, on which we define $n+1$ vector fields $X_i$ ($i \in \{1, \ldots,n+1\}$). We call $\Switch$ the \defemph{switching manifold} and assume that it is contained in the boundary of the regions $R_i$. We denote by $d: M \times M \to \R$ the distance function induced by the Riemannian metric.

We call $\mathfrak{X}^r$ the space of C$^r$-vector fields $X:M\longrightarrow TM$ with $1\leq r \leq k$ where $k$ is sufficiently large. Call $\Omega$ the space of non-smooth vector fields $Z: M \rightarrow TM$ such that
    \begin{equation}
    \label{eq:Z}
    Z(x) =
        \begin{cases}
        X_i(x), &\text{if } x \in R_i \\
        \displaystyle\frac{Y_{i_2}h_i(x)Y_{i_1}-Y_{i_1}h_i(x)Y_{i_2}}{Y_{i_2}h_i(x)-Y_{i_1}h_i(x)}, &\text{if } x \in \Sigma_i
        \end{cases}
    \end{equation}
where $Y_{i_1}$ and $Y_{i_2}$ are some of the vector fields $X_i$ where $Y_{i_1}$ is the vector field associated to the positive part of $h_i$ and $Y_{i_2}$ associated to the negative part of $h_i$.

There are four kinds of points on $\Switch$. They can be classified as follows:
\begin{enumerate}
	\item At \defemph{crossing points}, both vector fields point to the same side of $\Switch$, so trajectories reaching such points immediately cross from one side to another.
	\item At (\defemph{unstable}) \defemph{sliding points}, the vectors fields point in opposite directions but towards $\Switch$ so that trajectories reach sliding in finite future time.
	\item At \defemph{escaping points} (or \defemph{stable sliding points}), the vectors fields point in opposite directions but now away from $\Switch$ so that trajectories reach escaping in finite past time.
	\item At \defemph{tangency points}, one of the vector fields is tangent to $\Switch$ and the other may be tangent or not. In this first case, we refer to it as a \defemph{regular tangency} point and, in the second case, a \defemph{double tangency} point.
\end{enumerate}

The above set of points are denoted, respectively, as $\Cross$, $\Slide[s]$, $\Slide[u]$ and $\Tange$. We also denote the (\defemph{general}) \defemph{sliding region} $\Slide := \Slide[u] \cup \Slide[s]$. Some of these points are represented in \cref{fig:orbitas_filippov}. In this work we also assume%
    \footnote{
        This is mainly for simplicity since we could only assume the pseudo-equilibrium points are separated and then consider in \cref{lemma:dense_set_reaching_sliding} only points of $\Slide$ that are not pseudo-equilibrium points.
    }
there are no \defemph{pseudo-equilibrium points}, that is, points $p \in \Slide$ where $Z(p) = 0$. In \cref{eq:Z} the field is defined on $\Switch$ using \textit{Filippov's convention}; it consists of a vector that is both tangent to $\Switch_i$ and is a convex combination of the vector fields $Y_{i_1}$ and $Y_{i_2}$ (see \cref{fig:convencao_filippov}).

\begin{figure}
    \centering
    \includesvg{orbitas_filippov}
    \caption{
        A Filippov system on $\Sphere^2$. The continuous path between the tangency points is a stable sliding region $\Slide[s]$ (notice the pseudo-equilibrium reached by the two dashed orbits).
    }
    \label{fig:orbitas_filippov}
\end{figure}

\begin{figure}
	\centering
		\includesvg{convencao_filippov}
		\caption{
            Defining a vector field on the switching manifold $\Switch$. The vector tangent to $\Switch$ is a convex combination of the other two vectors.
        }
	\label{fig:convencao_filippov}
\end{figure}

From \cref{eq:Z} we note that a non-smooth vector field $Z$ is also defined on points of sliding and escaping type through a convex combination of  $Y_{i1}$ and $Y_{i2}$. Those orbits sliding on $\Switch$ are formed by points on which trajectories collide to it for forward or backward finite time (see \cref{fig:orbitas_filippov}).

The following definition can be found in \cite{Marcel}.

\begin{definition}
A \defemph{Filippov orbit} (or \defemph{solution}) of $Z$ is a map $\gamma: \R \to M$ such that: if $\gamma(t)$ is outside $\Switch$, then the orbit is locally determined by the smooth vector fields $X_i$; once the orbit touches $\Switch$, then it is determined by the (sliding) vector field on $\Switch$ (\cref{eq:Z}), considering that, in the case the orbit goes through an escaping region, it may exit at any arbitrary moment.
\end{definition}

\begin{definition}
The \defemph{saturation} of a set $A \subset M$, denoted $A_\varphi$, is the union of every Filippov orbit with initial condition on $A$.
\end{definition}

For our purposes we will mainly deal with the set $\Slide_\varphi$.

\begin{definition}
\label{def:sets_that_reach_sliding}
We define the following sets:
\begin{enumerate}
    \item $E_+$ is the set of points of $M$ that first loose uniqueness going forward in $\Slide[s]$.
    \item $E_-$ is the set of points of $M$ that first loose uniqueness going backward in $\Slide[u]$.
    \item $D_+$ is the set of points of $M$ that have some Filippov orbit staring at which that reaches $\Slide[s]$ going forward.
    \item $D_-$ is the set of points of $M$ that have some Filippov orbit staring at which that reaches $\Slide[s]$ going backward.
\end{enumerate}
\end{definition}

Next, we introduce the definition of non-deterministic chaos. For simplicity, we will just say \textit{chaotic Filippov systems}. The definition is based on the classical definition of Devaney for chaos (see \cite{devaney}), but adapted for Filippov systems.

\begin{definition}
\label{definicao transitividade2}
Let $Z$ be a Filippov system on a manifold $M$ (as in \cref{eq:Z}). We define the following:
	\begin{enumerate}
		\item $Z$ is \defemph{topologically transitive} if given any two non-empty open sets $U$ and $V$ of $M$, there exists a Filippov orbit from a point of $U$ to a point of $V$.
    	\item $Z$ exhibits \defemph{sensitive dependence on initial conditions} if there exists a fixed $\delta > 0$ such that, for any non-empty open set $U$, there exist points $x, y \in U$, Filippov orbits $\gamma_x$ and $\gamma_y$ which start at $x$ and $y$, respectively, and some time $t$ such that $d(\gamma_x(t), \gamma_y(t)) > \delta$.
		\item a Filippov orbit $\gamma$ is \defemph{periodic} if there is $\tau \in \R$ such that $\gamma(t) = \gamma(t+\tau)$ for every $t \in \R$.
		\item $Z$ is \defemph{chaotic} if it is topologically transitive, has sensitive dependence on initial conditions and the union of all Filippov periodic orbits is a dense set.
	\end{enumerate}
\end{definition}

\subsection{Topology}

We briefly state some definitions and notation related to topology. We denote the interior of a set $A$ by $\Int(A)$ and the complement of a set $A$ by $A^\complement = M \setminus A$. A \defemph{residual} set is a set that is a countable intersection of sets whose interior is dense. This is equivalent to being the complement of a \defemph{meager} set, that is, a countable union of sets whose closure has empty interior. Residual sets represent a way to express the idea of ``almost all'' only using topology.

\subsection{Orbit spaces of Filippov systems}
\label{ssec:preliminaries-orbit_spaces}

In this subsection we follow definitions first presented in \cite{ACV} and later expanded on in \cite{GMV}. In the broad context of orbit spaces, we may take $M$ to be any bounded connected Riemannian manifold and the Filippov field $Z$ to be bounded (that is, its norm $\nor{Z}$, given by the supremum, must be bounded)\cite{GMV}. In our case here, we assume the particular context of the compact $2$-dimensional Riemannian manifold $M$ as exposed in the preceding \cref{ssec:preliminaries-filippov_systems}.

The \defemph{orbit space} of the Filippov system $Z$ over $M$ is the set of all possible (Filippov) orbits of $Z$, denoted $\wt M$. Denoting by $d: M \times M \to \R$ the distance function induced by the Riemannian of $M$, we define the \defemph{supremum distance} $\wt d_{\sup}$ on the orbit space $\wt M$ by setting, for each $\gamma_0, \gamma_1 \in \wt M$,
    \[
    \wt d_{\sup}(\gamma_0, \gamma_1) := \sum_{i \in \Z} \frac{1}{2^{\abs{i}}} \sup_{i \leq t < i+1} d(\gamma_0(t), \gamma_1(t)).
    \]
The distance converges for every pair of orbits because the space is compact. This turns $\wt M$ into a metric space that is complete, separable and has no isolated points \cite{GMV}.

The dynamics on $\wt M$ is induced by $Z$: we define a flow $\wt \Phi^t$ (for each $t \in \R$) as
    \begin{align*}
    \func{\wt\Phi^t}{\wt M}{\wt M}{\gamma}{
        \begin{aligned}[t]
        \func{\wt\Phi^t(\gamma)}{\R}{\R}{s}{\gamma(t+s)}.
        \end{aligned}
        }
    \end{align*}
It can be shown that the flow $\wt\Phi$ is continuous \cite{GMV}.

On the orbit space, we have uniqueness of solutions, which is most often not true for Filippov systems, so we may use concepts and techniques of the general theory of flows on $\wt M$ to try and obtain information about $Z$. One example of this is done in the next subsection for the definition of topological entropy for Filippov systems: we define the topological entropy of $Z$ as the topological entropy of the flow in the orbit space.

\subsection{Topological entropy}

To define the topological entropy of a Filippov system, we follow the approach of \cite{ACV}. We will first present the definition of topological entropy of Bowen-Dinaburg \cite{Bowen-EntropyGroupEndomorphismsHomogeneousSpaces} for compact metric spaces. We will assume our space is compact. We will also use generating sets and will not define separated sets, but a complete presentation can be found in \cite{viana-oliveira-ergodic-theory}. In this section we will denote the generic compact metric spaces by $K$ to avoid confusion with our previously defined manifold $M$.

\begin{definition}
Let $(K, d)$ be a compact metric space, $f: K \to K$ a continuous transformation, $\varepsilon>0$ and $n \in \N$. A \defemph{$(n, \varepsilon)$-generating set} is a set $E \subseteq K$ that satisfies the following: for every point $x \in K$, there is a point $a \in E$ such that $d(f^i(x), f^i(a)) < \varepsilon$ for every $i \in \{0, \ldots, n-1\}$.
\end{definition}

If we define the \defemph{dynamical ball} of center $a$, radius $\varepsilon$ and length $n$ (for the dynamics $f$) as the set
    \[
    B^n_f(a, \varepsilon) := \set{x \in K}{d(f^i(x), f^i(a)) < \varepsilon \text{ for } 0 \leq i < n},
    \]
the preceding definition is equivalent to the condition $K \subseteq \bigcup_{a \in E} B^n_f(a, \varepsilon)$.

We define $g^n(f,\varepsilon)$ to be the \defemph{smallest cardinality} of a $(n, \varepsilon)$-generating set, that is
    \[
    g^n(f, \varepsilon) := \min\set{\# E}{E \subseteq K \subseteq \textstyle\bigcup_{a \in E} B^n_f(a, \varepsilon)}.
    \]
By compactness, this number is always finite . 

The dynamical ball $B^n(a, \varepsilon)$ gives us information about which points stay $\varepsilon$-close to the center $a$ for $n$ units of time (i.e. $n$ iterations of $f$), so the minimal cardinality $g^n(f,\varepsilon)$ counts how many different orbits of the system there are, but for an approximated time $n$ and with an imprecision $\varepsilon$. We are interested in the exponential growth of the number of orbits of the system as time passes, so in the following definition we take the limit as $n \to \infty$ and $\varepsilon \to 0$ of the quantity $\frac{1}{n} \log g^n(f, \varepsilon)$. This limit can be shown \cite{viana-oliveira-ergodic-theory} to be a well-defined number in $\left[0, \infty \right]$.

\begin{definition}
Let $K$ be a compact metric space and $f: K \to K$ a continuous transformation. The \defemph{topological entropy of $f$} is
    \[
    h(f) := \lim_{\varepsilon \to 0} \limsup_{n \to \infty} \frac{1}{n} \log g^n(f, \varepsilon).
    \]
\end{definition}

We can define the \defemph{topological entropy a continuous flow} $\Phi^t$ ($t \in \R$) in a compact metric space $K$ in an analogous way using dynamical balls and generating sets, and its value is the same as the topological entropy of the time $1$ map $\Phi^1: K \to K$ of the flow \cite{viana-oliveira-ergodic-theory}, which may in fact be taken as the definition of entropy for flows. This is the motivation for the definition of topological entropy for Filippov systems, first presented in \cite{ACV}, which we will use.

\begin{definition}
\label{def:entropy.Filippov}
Let $Z$ a Filippov system on a manifold $M$. The \defemph{topological entropy} of $Z$ is the topological entropy of the time $1$ map $\wt \Phi^1: \wt M \to \wt M$ on the orbit space $\wt M$, denoted
    \[
    h(Z) := h(\wt \Phi^1).
    \]
\end{definition}

\section{Proof of \texorpdfstring{\cref{theo:complete}}{Theorem~\ref{theo:complete}}}
\label{sec:theo:complete}

We shall prove a number of important lemmas that will be needed for the proof of our result. 
%
%
First we show how topological transitivity leads to the existence of a connection of points on the sliding region.

\begin{figure} 
    \centering
    \includesvg{saturacao_sliding_escape}
    \caption{
        An open set $U$ that reaches $\Slide[s]$ flowing forward and an open set $V$ that reaches $\Slide[u]$ flowing backwards.
    }
    \label{fig:saturacao_sliding_escape}
\end{figure}

\begin{lemma}
\label{lemma:connecting_sliding}
Assume topological transitivity. For every pair of points $q_0$, $q_1 \in \Slide$, there is a Filippov orbit segment from $q_0$ to $q_1$.
\end{lemma}
\begin{proof}

The points $q_0$ and $q_1$ can be in either $\Slide[s]$ or $\Slide[u]$, so there are $4$ possibilities of connections of $q_0$ and $q_1$. We will describe how to connect them in all cases.
If $q_0 \in \Slide[s]$, since $\Slide$ is relatively open in $\Switch$ we can choose a point $q'_0$ before $q_0$ (that is, $q'_0$ is in the same connected component of $\Slide[s]$ as $q_0$ and it reaches $q_0$ through an orbit segment of the sliding vector field on $\Slide[s]$) and an open set $U_0 \subset M$ around $q'_0$ (that does not contain $q_0$) in such a way that every point of $U_0$ flows to $\Slide[s]$ and reaches $q_0$ (see \cref{fig:saturacao_sliding_escape,fig:connection_sliding}).

If $q_0 \in \Slide[u]$, we can choose an open set $U_0$ in an analogous way, but now inverting the direction of the flow of time: we take a point $q'_0$ after $q_0$ and an open set $U_0$ around it such that every point of $U_0$ must have have come from $\Slide[u]$ and, consequently, passed through $q_0$ (see \cref{fig:connection_sliding}).
In the same way we choose an open set $U_1 \subset M$ for $q_1$.

By topological transitivity, there is a Filippov orbit segment from a point $p_0 \in U_0$ to a point of $p_1 \in U_1$. In the case that $q_0 \in \Slide[s]$, by the choice of $U_0$ the orbit segment must have passed through $q_0$, so it can be restricted to an orbit segment from $q_0$ to $p_1$;
in the case that $q_0 \in \Slide[u]$, by the choice of $U_0$ the point $p_0$ must have flowed away from $q_0$, so the orbit segment can also be extended to one from $q_0$ to $p_1$.

On the other end, the situation is inverted. If $q_1 \in \Slide[s]$, the point $p_1$ must flow into $\Slide[s]$ and pass through $q_1$, so the orbit segment may be extended to one from $q_0$ to $q_1$;
if $q_1 \in \Slide[s]$, the point $p_1$ must have passed through $q_1$, so the orbit segment may also be extended to one from $q_0$ to $q_1$.
\end{proof}

\begin{figure}
    \centering
    \includesvg{connecting_sliding}
    \caption[Connection of two points on the sliding region.]{
        All the $4$ possibilities of connection of $2$ points $q_0$ and $q_1$ on the sliding region $\Slide$. The dashed orbit in the center of the figure represents that each orbit segment on the left can connect to each on the right.
    }
    \label{fig:connection_sliding}
\end{figure}


The next lemma establishes some properties of the sets described in \cref{def:sets_that_reach_sliding}.

\begin{lemma}
\label{lemma:dense_set_reaching_sliding}
Assume topological transitivity.
\begin{enumerate}
    \item Suppose $\Slide[s] \neq \emptyset$. Then $E_+$ is open and $D_+$ is dense.
    \item Suppose $\Slide[u] \neq \emptyset$. Then $E_-$ is open and $D_-$ is dense.
\end{enumerate}
\end{lemma}
\begin{proof}
We prove the first item, since the second is the same but with the direction of orbits inverted.

Let $p \in E_+$ and $q \in \Slide[s]$ be the point where $p$ first looses uniqueness going forward. Since $\Slide[s]$ is open in $\Switch$ and all points of the orbit from $p$ to $q$ are regular, we can find an open set around $p$ that first looses uniqueness in $\Slide[s]$. This shows $E_+$ is open.

Showing $D_+$ is dense is the same as showing $D_+^\complement$ has empty interior. For this, suppose for the sake of contradiction that there is a non-empty open set $U \subset D_+^\complement$. Since $\Slide[s] \neq \emptyset$, there is an open set $V$ such that all of its points reach $\Slide[s]$ (see \cref{fig:saturacao_sliding_escape}). By topological transitivity, there is an orbit from $U$ to $V$, and so this orbit can be extended to reach $\Slide[s]$, which contradicts the fact that $U$ is a set of points that do not reach $\Slide[s]$ by any orbit.
\end{proof}

In some examples of transitive Filippov systems in the literature like the bean model and the sphere model \cite{BCE,ACV,EJV}, it is easy to check that the sets $E_+$ and $E_-$ are dense, so in the following results we will assume this in order to obtain dense periodic orbits (for further discussion, check \cref{ssec:comments_residual_set}).


\begin{lemma}
\label{lemma:periodic_segment_open_set}
Assume topological transitivity and $\Slide \neq \emptyset$. Let $q_0 \in \Slide$ and $U \subseteq M$ be a non-empty open set. Then there is a periodic Filippov orbit segment through $q_0$ that intersects $U$.
\end{lemma}
\begin{proof}
We assume that $q_0 \in \Slide[s]$. The proof for $\Slide[u] \neq \emptyset$ is the same after inverting the direction of orbits. Since $q_0$ is in the sliding region, we may choose an open set $U_0 \subseteq M$ such that all of its points pass through $q_0$ going forwards, as done in \cref{lemma:connecting_sliding} (see \cref{fig:periodic_segment_open_set}). Since the set $E_+$ is open and we assume it is dense, take $p \in E_+ \cap U$ and an open neighborhood $V \subseteq E_+$ of $p$.

By topological transitivity, we choose a Filippov orbit segment $\gamma_0$ from $U_0$ to $U \cap V$. Since all the points of $U_0$ pass through $q_0$, $\gamma_0$ can be restricted to start at $q_0$, and since $\gamma_0$ passes through $V \subseteq E_+$, it can be extended to reach a point $q_1 \in \Slide[s]$. Now from \cref{lemma:connecting_sliding}, there is an orbit segment $\gamma_1$ from $q_1$ to $q_0$. Concatenating the orbit segments $\gamma_0$ and $\gamma_1$, we obtain a periodic orbit segment starting at $q_0$ that intersects $U$.
\end{proof}

\begin{figure}
    \centering
    \includesvg{periodic_segment_open_set}
    \caption[Periodic orbit connecting a point to an open set]{
        Given a point $q_0 \in \Slide$ and an open set $U$, we can find a periodic orbit segment staring at $q_0$ that intersects $U$. In the figure we depict the case in which $\Slide[s] \neq \emptyset$.
    }
    \label{fig:periodic_segment_open_set}
\end{figure}

Finally, we can prove the theorem.

\begin{proof}[Proof of \cref{theo:complete}]
We prove each item separately.
\begin{enumerate}
\item Take $x \in \Slide$.
    \begin{enumerate}
        \item Let $\{U_i\}_{i \in \N}$ be a countable base of $M$. By \cref{lemma:periodic_segment_open_set}, for each $U_i$ there is a periodic Filippov orbit segment $\gamma_i$ that starts at $x$ and intersects $U_i$. Define $\gamma$ to be the concatenation of the orbit segments $\gamma_i$ in order of index, backwards and forwards; i.e., define
            \[
            \gamma := \cdots \gamma_1 \cdot \gamma_0 \cdots \gamma_0 \cdot \gamma_1 \cdots.
            \]
        Now for every non-empty open set $U \subseteq M$, there is an open set $U_i$ contained in $U$, so $\gamma$ intersects $U$ because $\gamma_i$ intersects $U_i$. This shows $\gamma$ is dense in $M$.

        \item Now let $P$ be the union of (the image of) all the periodic orbits of $Z$ through $x$. By \cref{lemma:periodic_segment_open_set} it follows that, for every non-empty open set $U \subseteq M$, there is a periodic orbit segment $\gamma$ that passes through $x$ and intersects $U$, hence a periodic orbit through $x$ and $U$ which means that $P \cap U \neq \emptyset$, so $P$ is dense in $M$.
    \end{enumerate}
We now take $\Delta$ as the union of all the points on periodic orbits as constructed in the preceding items.

\item We will assume that $\Slide[s] \neq \emptyset$, but the proof for $\Slide[u] \neq \emptyset$ is analogous. First, we take $q \in \Slide[s]$ and two open sets $U_0$ and $U_1$, each one on each side of the connected component of $\Slide[s]$ on which $q$ is (see \cref{fig:sensitive_dependence_initial_conditions}), and not intersecting $\Slide[s]$. Now we choose two different periodic orbits $\beta_0$ and $\beta_1$ staring at $q$, similarly to what was done in \cref{lemma:periodic_segment_open_set}), but in this case we force each orbit to enter $\Slide[s]$ on the open sets $U_0$ and $U_1$, respectively, before they reach $q$. Denote their periods respectively by $b_0$ and $b_1$. Since $\Slide[s]$ is open, we may disturb these orbits slightly in order to have the ratio of their periods, $\frac{b_1}{b_0}$, be an irrational number. For each $i \in \{0,1\}$, we choose a point $p_i \in U_0$ through which orbit $\beta_i$ passes, and define a restricted orbit $\beta'_i$ from $q$ to $p_i$, and denote its period $b'_i$. Denote $\delta := d(U_0, U_1)$, the infimum of the distance between any point of $U_0$ and any point of $U_1$. Since each open set has been taken on one side of $\Slide[s]$, we have $\delta > 0$.

Now let $U$ be any non-empty open set. Since out set $\Delta$ as defined in the preceding item of this proof is dense, we may take $x, y \in \Delta \cap U$. Then there exist orbits $\alpha_x: [0,a_x] \to M$ and $\alpha_y: [0, a_y] \to M$ starting at $x$ and $y$, respectively, and ending at $q$. Now we define orbit $\gamma_x$ as the concatenation of $\alpha_x$ and $k_x \in \N$ orbits $\beta_0$, followed by the restricted orbit $\beta'_0$; that is, $\gamma_x := \alpha_x \cdot (\beta_0)^{k_x} \cdot \beta'_0$. Likewise, define $\gamma_y := \alpha_y \cdot (\beta_1)^{k_y} \cdot \beta'_1$, for $k_y \in \N$. We denote $c_x := a_x + k_x b_0 + b'_0$ and $c_y := a_y + k_y b_0 + b'_0$, which represent the time $x$ (or $y$) takes to traverse $\gamma_x$ (resp. $\gamma_y$) and reach $p_0$ (resp. $p_1$). Since the ratio $\frac{b_1}{b_0}$ is irrational, the integers $k_x$ and $k_y$ may be chosen in order that the $c_x$ and $c_y$ are as close as needed, such that (supposing $c_x \leq c_y$ without loss of generality) $\gamma_x(c_x) = p_0$ and $\gamma_y(c_y) \in U_1$, close to $p_1$. Defining $t := c_x$, this implies that $d(\gamma_x(t), \gamma_x(t) > \delta$.
\qedhere

\begin{figure}
    \centering
    \includesvg{sensitive_dependence_initial_conditions}
    \caption[Sensitive dependence on initial conditions.]{
        For any non-empty open set $U$, there are points $x$ and $y$ and orbits $\gamma_x$ and $\gamma_y$ such that, following these orbits, the two points eventually become $\delta$ apart. The orbits are constructed by connecting the points to a sliding region and then following periodic orbits for some time.
    }
    \label{fig:sensitive_dependence_initial_conditions}
\end{figure}

\end{enumerate}
\end{proof}

\subsection{Comments about finding a residual set}
\label{ssec:comments_residual_set}

In the classical case of smooth vector-fields, transitivity is equivalent to the existence of a residual set of points through which there is a dense orbit. In our setting, we have found a dense set $\Delta$ instead. As mentioned after the proof of \cref{lemma:dense_set_reaching_sliding}, for the bean model and the sphere model \cite{BCE, EJV} both sets $E_+$ and $E_-$ are dense, hence open dense sets. So we pose the following question:

\begin{question}
In the general context of a transitive Filippov system with a finite number of tangency points, are the sets $E_+$ and $E_-$ dense?
\end{question}

If both\footnote{Notice we are assuming here that both $\Slide[s]$ and $\Slide[u]$ are non-empty.} $E_+$ and $E_-$ were open dense sets, this would imply that their intersection $E := E_+ \cap E_-$ is a residual set (and hence a dense set by Baire category theorem). The proof of \cref{lemma:periodic_segment_open_set} can be easily adapted to be valid for points $x$ in this residual set $E$ (\cref{fig:periodic_segment_open_set_residual}). This can be done by starting with a point $x \in E$ instead of $q_0 \in \Slide$, and then connecting $x$ to $\Slide$ forward and backwards, which is possible by definition of $E$. After that, the final details of the proof would be almost the same as in \cref{lemma:periodic_segment_open_set}.

\begin{figure}
    \centering
    \includesvg{periodic_segment_open_set_residual}
    \caption[Adaptation of the proof of \cref{lemma:periodic_segment_open_set} for a residual set $\Delta$.]{
        Given a point $x \in \Delta$ and an open set $U$, we could find a periodic orbit segment staring at $x$ that intersects $U$.
    }
    \label{fig:periodic_segment_open_set_residual}
\end{figure}

\section{Proof of \texorpdfstring{\cref{theo:transitive-equivalence}}{Theorem~\ref{theo:transitive-equivalence}}}
\label{sec:transitive-equivalence}

\begin{proof}[Proof of \cref{theo:transitive-equivalence}]
	
If $\Slide \setminus \emptyset$, then the result follows from \cref{theo:complete}. The case $\Slide = \emptyset$ will follow from the two lemmas below. We now assume that the Filippov system is topologically transitive, since the converse is trivial.

\begin{lemma}\label{lemma:dense.T}
Assume topological transitivity, $\Slide = \emptyset$ and $\Sigma^{t}_\varphi$ is dense. Then there is a residual set of $M$ such that each point in this set has some dense Filippov orbit.
\end{lemma}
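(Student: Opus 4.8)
The plan is to run the classical Birkhoff argument (transitivity $\Rightarrow$ residual set of transitive points), with the non-uniqueness of Filippov orbits tamed by the standing hypothesis $M^s=\emptyset$. First I would record the key structural fact that, since $M^s=\emptyset$, a Filippov orbit is uniquely determined in both time directions as long as it avoids the finite tangency set $\Sigma^t$: in the absence of sliding and escaping, branching can only occur when an orbit reaches one of the finitely many points of $\Sigma^t$. Consequently $\Sigma^t_\varphi$ is a countable union of orbit arcs, and each such arc is, on every compact time interval, a finite union of $C^1$ curves, hence nowhere dense; so $\Sigma^t_\varphi$ is \emph{meager} and $M\setminus\Sigma^t_\varphi$ is a dense $G_\delta$. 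In particular, for $x\in M\setminus\Sigma^t_\varphi$ the Filippov orbit $\gamma_x$ through $x$ is unique, which is precisely what makes the combination step below go through (this is the role played by $\Sigma^s\neq\emptyset$, via Lemma \ref{lemma:connecting sliding}, in the other case).

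Next I would fix a countable basis $\{U_n\}_{n\in\mathbb N}$ for the topology of $M$ and set $G_n:=\{x\in M\setminus\Sigma^t_\varphi:\gamma_x\cap U_n\neq\emptyset\}$. Density of $G_n$ follows from topological transitivity: given an open set $W$, there is a Filippov orbit meeting $W$ and $U_n$; one then moves the point at which it meets $W$ slightly off the meager set $\Sigma^t_\varphi$ and uses continuous dependence on the portion of the orbit running to $U_n$ to get $G_n\cap W\neq\emptyset$. Openness of $G_n$ is the usual continuous-dependence argument: if $x\in G_n$, the arc of $\gamma_x$ from $x$ to $U_n$ meets $\Sigma$ only at crossing points (it avoids $\Sigma^t$), and along crossing points the Filippov flow depends continuously on initial conditions, so a whole neighbourhood of $x$ lies in $G_n$. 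Then $G:=\bigcap_{n}G_n$ is a $G_\delta$ set, dense by Baire's theorem, and every $x\in G$ has $\gamma_x$ meeting every $U_n$, hence $\gamma_x$ is a dense Filippov orbit.

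I would also keep in reserve the alternative route parallel to the one used for $\Sigma^s\neq\emptyset$ (Lemmas \ref{lemma: dense set reaching sliding}--\ref{lemma:segment.on.Vi}), with tangency points playing the role sliding points played there: using density of $\Sigma^t_\varphi$, produce for each $n$ a sub-open set $V_n\subset U_n$ all of whose points lie on an orbit segment reaching a fixed tangency point $p_0$ (after connecting, via transitivity, the tangency generating $V_n$ to $p_0$ in both time directions), concatenate at $p_0$ the resulting closed segments, and obtain a single Filippov orbit through $p_0$ meeting every $U_n$. This viewpoint is the one to fall back on if the density step of the first approach proves delicate, and it additionally yields the density of periodic orbits by the same concatenation mechanism.

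The step I expect to be the main obstacle is precisely the continuous-dependence bookkeeping near the tangency points in the density (and openness) of $G_n$: away from $\Sigma$ this is standard ODE continuous dependence and crossing points are harmless, but an orbit that grazes a fold need not be shadowed by nearby orbits, some of which miss $\Sigma$ near the fold entirely. The resolution is to observe that the initial conditions for which this failure matters are contained in an arbitrarily small neighbourhood of $\Sigma^t_\varphi$ and hence are negligible for a Baire-category conclusion; but stating this cleanly for regular tangencies, double tangencies, and checking that no residual sliding-type configuration survives when $M^s=\emptyset$ is the technical heart of the argument.
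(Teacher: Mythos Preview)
Your Birkhoff-style argument is a different route from the paper's, and the step you correctly flag as the main obstacle is where it remains incomplete. The paper avoids the continuous-dependence analysis near tangencies entirely by a reparametrization trick: choose a smooth $g:M\to[0,1]$ with $g^{-1}(0)=\Sigma^t$ and replace $Z$ by $gZ$. Since $M^s=\emptyset$, away from the finitely many tangency points the Filippov system already defines a flow (only crossing occurs on $\Sigma$), so $gZ$ is a genuine \emph{continuous} flow on $M$ whose orbits agree setwise with the Filippov orbits, except that the tangency points have become equilibria. The paper then uses the hypothesis that $\Sigma^t_\varphi$ is dense to build, by concatenation at the tangency points, a single dense Filippov orbit $\gamma$; its forward tail past the last tangency visit is still dense and avoids $\Sigma^t$, hence is a dense orbit of the continuous flow $gZ$. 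The classical result for continuous flows now gives a residual set $D$ of transitive points for $gZ$; a dense $gZ$-orbit cannot meet the equilibria $\Sigma^t$, so it is also a dense Filippov orbit, and intersecting $D$ with the dense $G_\delta$ set $M\setminus\Sigma^t_\varphi$ yields the conclusion.

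Two remarks on your approach. First, it never uses the hypothesis that $\Sigma^t_\varphi$ is dense, which is a warning sign: the paper splits into the present lemma and Lemma~\ref{lemma:T.nondense} precisely because this hypothesis matters. Second, the density of $G_n$ is not established by your argument. If the transitive orbit joining $W$ to $U_n$ passes through a tangency $p$, perturbing the initial point off $\Sigma^t_\varphi$ forces the new (now unique) orbit to miss $p$; near $p$ the distinct Filippov continuations through $p$ correspond to geometrically different nearby behaviours, and the perturbed orbit may be forced onto the wrong branch. The set of such bad initial conditions is in general an open one-sided neighbourhood of the local tangency leaf, not a meager set, so your proposed fix does not close the gap. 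Your fallback concatenation route does produce a single dense orbit (this is essentially the paper's first step), but it does not by itself give a $G_\delta$ set of transitive points; the missing idea is precisely the passage to the continuous flow $gZ$, which is what lets one invoke the classical $G_\delta$ statement without any further analysis of Filippov dynamics near $\Sigma^t$.
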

\begin{proof}
Now, recall that we are assuming a finite number of tangency points and our space is two dimensional. Hence, we may construct a Filippov orbit inside $\Sigma^{t}_\varphi$ which is dense.

Let us call this dense Filippov orbit as $\gamma(t)$ and let $t_0$ the time such that $\gamma(t_0)\in \Sigma^{t}$ and $\gamma(t)\cap \Sigma^{t}=\emptyset$ for all $t>t_0$. Therefore $\gamma([t_0,\infty])$ is dense in $M$ and it does not intersect any tangency point.

Let $g:M \rightarrow [0,1]$ be a smooth function such that $g^{-1}(0)=\Tange$. Now let us change the velocity of the Filippov orbit associated with the vector field of the system by multiplying it by the positive map $g$. Hence the new vector field $gZ$ is in fact a continuous vector field whose orbits coincide with the previous one, with the exception that now the tangency points are fixed ones. But now this new Filippov system is in fact a continuous flow, and for this new flow, $\gamma[t_0,\infty]$ is the image of some trajectory of it (which is dense).

Hence, in particular because there is a dense orbit for continuous flows we guarantee from classical results of continuous flows that there is a residual set $D$ of points with dense orbit. Notice that none of these dense orbits can pass through the singular points of $gZ$, hence these are also dense orbits for the Filippov system.

Notice that $\Sigma^{t}_\varphi$ is a meager set, hence $D \setminus \Sigma^{t}_\varphi$ is a residual set and the image of the orbit of these points for the flow $gZ$ is the same as for the original Filippov system.
\end{proof}

\begin{lemma}
\label{lemma:T.nondense}
Assume topological transitivity, $\Slide = \emptyset$ and $\Sigma^{t}_\varphi$ is not dense. Then there is a residual set of $M$ such that each point in this set has some dense orbit. Moreover, these dense orbits are regular orbits.
\end{lemma}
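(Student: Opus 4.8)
The plan is to reduce this case to the previous lemma's technique, but now working away from the tangency set. First I would observe that since $\Sigma^t_\varphi$ is not dense, its complement contains a nonempty open set $\mathcal{O}$; and since $M^s=\emptyset$ (so there are no sliding or escaping regions), every point of $\Sigma$ outside the finitely many tangency points is a crossing point. Consequently, a Filippov orbit that avoids $\Sigma^t$ is uniquely determined in both time directions: it is a \emph{regular orbit} in the sense that solutions simply cross $\Sigma$ transversally and never lose uniqueness. The key structural point is therefore that on the $G_\delta$ set $(\Sigma^t_\varphi)^c$ the Filippov flow behaves exactly like an ordinary (continuous) flow once we reparametrize.

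Next I would build the dense orbit. Because $M$ is two-dimensional with only finitely many tangency points, and because topological transitivity holds, I would concatenate finitely many orbit segments (using transitivity to hop between a countable basis of open sets contained in $\mathcal{O}$ or more generally in $(\Sigma^t_\varphi)^c$) to produce a single Filippov orbit $\gamma$ whose forward image is dense in $M$; arguing as in Lemma \ref{lemma:dense.T}, I can take $t_0$ large enough that $\gamma([t_0,\infty))$ is dense and disjoint from $\Sigma^t$. Then I would use the same trick of multiplying $Z$ by a smooth nonnegative bump function $g$ with $g^{-1}(0)=\Sigma^t$: the rescaled field $gZ$ is a genuine continuous vector field on $M$ (the tangency points become equilibria, and across the crossing part of $\Sigma$ the field is continuous because crossing trajectories glue smoothly up to reparametrization), and $\gamma([t_0,\infty))$ is the image of one of its trajectories, hence $gZ$ has a dense orbit.

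Now I would invoke the classical result for continuous flows: existence of one dense orbit implies existence of a $G_\delta$ set $D$ of points with dense $gZ$-orbit. Since no $gZ$-dense orbit can contain an equilibrium of $gZ$, every point of $D$ has a dense orbit that avoids $\Sigma^t$, so it is a dense \emph{regular} orbit for the original Filippov system $Z$ as well. Finally, because $\Sigma^t_\varphi$ is meager (finitely many tangency points saturated in a two-dimensional manifold), $D\setminus\Sigma^t_\varphi$ is still a $G_\delta$ set, and for each of its points the $Z$-orbit and the $gZ$-orbit have the same image, which is dense; this is the desired $G_\delta$ set, and all its orbits are regular. The main obstacle I anticipate is justifying carefully that $gZ$ really is continuous (and that its orbit images genuinely coincide with the Filippov orbit images) across the crossing portion of $\Sigma$ — one must check that the two one-sided vector fields, after being multiplied by $g$, match up to the correct positive scalar so that no spurious discontinuity or loss of uniqueness survives; handling the finitely many tangency points (which could be double tangencies) as isolated equilibria of $gZ$ also needs a word of care.
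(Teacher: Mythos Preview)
Your approach diverges from the paper's, and the step where you ``concatenate orbit segments using transitivity'' to manufacture a dense Filippov orbit has a real gap. Since $M^s=\emptyset$, the only places a Filippov orbit can fail to be uniquely determined are the finitely many tangency points; any genuine concatenation of distinct orbit segments must therefore occur at a point of $\Sigma^t$, forcing the assembled orbit to lie in $\Sigma^t_\varphi$. If you visit a countable basis this way you pass through tangency points infinitely often, so there is no last time $t_0$ after which the tail $\gamma([t_0,\infty))$ is both dense and disjoint from $\Sigma^t$. The device you are importing from Lemma~\ref{lemma:dense.T} leaned on the hypothesis that $\Sigma^t_\varphi$ is \emph{dense}; here the hypothesis is the opposite, and the construction does not transfer. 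In effect, producing a single dense \emph{regular} orbit is essentially the conclusion of the lemma, so this step is close to circular. Your own ``main obstacle'' is also genuine: at crossing points the one-sided fields $X_i$ need not agree even up to a positive scalar, so $gZ$ is \emph{not} a continuous vector field there in general.

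The paper sidesteps both problems. It sets $A:=\operatorname{int}\big[(\Sigma^t_\varphi)^c\big]$ and shows directly that $A$ is open, dense and invariant: given any open $\mathcal V$, transitivity yields a Filippov orbit from some $x\in\mathcal V$ to a fixed open $\mathcal U\subset A$; since that orbit passes through a point of $(\Sigma^t_\varphi)^c$, it never meets $\Sigma^t$ and is therefore regular, so a small neighbourhood $\mathcal V_0\subset\mathcal V$ of $x$ already lies in $A$. On the invariant set $A$ the switching manifold is met only in crossing points, so the Filippov system restricted to $A$ is a genuine continuous flow, and it is transitive there; the classical equivalence for continuous flows then gives a $G_\delta$ set of points in $A$ (hence in $M$) with dense regular orbit. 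No explicit dense orbit needs to be built, and no global rescaling $gZ$ is required.
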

\begin{proof}
If $\Sigma^{t}_\varphi$ is not dense, then the interior of its complement $\left(\Sigma^{t}_\varphi\right)^\complement$ is non-empty. Let us take $\mathcal U \subset \Int[\left(\Sigma^{t}_\varphi\right)^\complement]=:A$. Now we prove that $A$ is dense. Indeed, given an open set $\mathcal V \subset M$, from the topological transitivity of the Filippov system there is a point $x \in \mathcal V$ and an orbit from $x$ to $\mathcal U$ which is regular since it outside the set where the break of uniqueness occurs. Hence there is an open set $\mathcal V_0 \subset V$ containing $x$ such that every segment of orbit from $\mathcal V_0$ to $\mathcal U$ is regular, which is true for $\mathcal V_0$ sufficiently small. Therefore from the invariance of the set $A$, we  get that $\mathcal V_0 \subset A$. That means the Filippov system on $A$ is topologically transitive, but on this invariant set $A$ the Filippov system determines a continuous flow since $A\cap\Sigma\subset\Cross$ because $\Slide = \emptyset$. Thus there is a residual set of points whose orbit is dense in $A$.
    
The lemma is proved, since $Z$ on $A$ is a continuous flows, $A$ is an open and dense set and $Z$ restricted to $A$ has a dense orbit.
\end{proof}

The above lemmas prove the result.
\end{proof}

\section{Proof of \texorpdfstring{\cref{theo:entropy}}{Theorem~\ref{theo:entropy}}}
\label{sec:theo:entropy}

We will now estimate the entropy of our topologically transitive Filippov system $Z$ to show it is strictly positive. We will use the orbit space $\wt M$ and its flow $\wt \Phi^t$, which is induced by $Z$. By \cref{def:entropy.Filippov}, the entropy of $Z$ is the entropy of the time $1$ map $\wt \Phi^1: \wt M \to \wt M$. We will define a subset $\wt \Gamma_p$ of $\wt M$ and consider on it the induced flow, and show that its entropy is strictly positive, which implies that the entropy of $\wt M$ is also positive since the entropy of a subsystem is always smaller than the entropy of the system \cite{viana-oliveira-ergodic-theory}.

To define the subset $\wt \Gamma_p$, we first need to have two different periodic orbits of the Filippov system $Z$ which have the same period and have a point in common. We start with a simple lemma that proves this is the case for our setting.

\begin{lemma}
\label{lemma:distinct_trajectories_with_same_period}
Assume topological transitivity and $\Slide \neq \emptyset$. There exist $2$ distinct periodic Filippov orbit segments $\gamma_0,\gamma_1: [0, \alpha] \to M$ of $Z$ with period $\alpha > 0$ and initial point $p = \gamma_0 (0) = \gamma_0(\alpha) = \gamma_1(0) = \gamma_1(\alpha) \in M$.
\end{lemma}
\begin{proof}
We can find two distinct periodic Filippov orbit segments $\eta_0$ and $\eta_1$ that have an initial point $p \in \Slide[s]$, in the same way it was done in \cref{lemma:connecting_sliding,lemma:periodic_segment_open_set}, and they can be forced to differ by making each one pass through each different side of the connected component of $\Slide$ being considered (see \cref{fig:saturacao_sliding_escape}). Since the orbit segments may have different periods, to obtain $\gamma_0$ and $\gamma_1$ with the same period we concatenate $\eta_0$ and $\eta_1$ in each possible order. The resulting orbit segments have the same period (the sum of the periods of $\gamma_0$ and $\gamma_1$).
\end{proof}

\subsection{Construction of the subsystem}

Now we present the construction of the subsystem $\wt \Gamma_p$ of the orbit space $\wt M$.
We take the periodic orbit segments $\gamma_0$ and $\gamma_1$ as in \cref{lemma:distinct_trajectories_with_same_period}. Let $\Gamma_0 := \gamma_0([0, \alpha])$ and $\Gamma_1 := \gamma_1([0, \alpha])$ denote their images, two curves on the base space $M$ that are distinct and have their initial point $p$ in common%
    \footnote{
    We do not need to assume the only intersection of the curves is $p$.
    }.
Denote the union of these curves as $\Gamma := \Gamma_0 \cup \Gamma_1$. The set $\wt \Gamma_p$ is the set of all the possible integral trajectories of $Z$ that start at $p$ and whose image lies on $\Gamma$.

All trajectories of $\wt \Gamma_p$ must start at $p$ and follow either $\Gamma_0$ or $\Gamma_1$, returning to $p$ after the period $\alpha$ has passed. Then, it must again follows either of the two curves and so on forwards and backwards in time. We can more formally describe this as follows. We define the set of indices $S := \{0, 1\}$ and let $S^\Z$ denote the space of all sequences $x: \Z \to S$, that is, sequences
    \[
    x = (\ldots, x_{-1}; x_0, x_1, \ldots)
    \]
such that $x_i \in S$. The shift map on $S^\Z$ is defined as the map $\sigma: S^\Z \to S^\Z$ such $\sigma(x) = (\ldots, x_0; x_1, x_2, \ldots)$; i.e., $\sigma(x)_i = x_{i+1}$ for each $i \in \Z$.
For each sequence $x \in S^\Z$, we can define an orbit $\gamma_x$ of $Z$ (which starts at $p$ and whose image lies on $\Gamma$) as the concatenation of the orbit segments $\gamma_0$ and $\gamma_1$ according to the entries of $x$: for each $i \in \Z$ and each $t \in \left[i\alpha, (i+1)\alpha \right[$, we have $\gamma_x(t) = \gamma_{x_i}(t-i\alpha)$. We will denote this by%
    \footnote{
    The use of curly braces is motivated by the similar notation used in the definition of functions by cases, for instance \cref{eq:Z}.
    }
    \[
    \gamma_x(t) =  \begin{cases}
                    \gamma_{x_i}(t-i\alpha), & i\alpha \leq t < (i+1)\alpha.
                    \end{cases}
    \]
This gives the characterization 
$\wt\Gamma_p = \set{\gamma_x}{x \in S^\Z}$.

To define the dynamics on $\wt \Gamma_p$, we take the map $\wt\Phi^\alpha: \wt\Gamma_p \to \wt\Gamma_p$ given on each $\gamma \in \wt\Gamma_p$ by $\wt\Phi^\alpha(\gamma)(t) = \gamma(\alpha+t)$. This is the flow map $\wt \Phi^\alpha: \wt M \to \wt M$ (as defined in \cref{ssec:preliminaries-orbit_spaces}) restricted to $\wt \Gamma_p$. The restriction is well defined because $\wt\Phi^\alpha(\gamma_x) = \gamma_{\sigma(x)} \in \wt\Gamma_p$ for every $x \in S^\Z$; this follows from the calculation
    \begin{align*}
    \wt\Phi^\alpha(\gamma_x)(t) &= \gamma_x(t+\alpha) \\
        &= \begin{cases}
            \gamma_{x_i}(t+\alpha-i\alpha), & i\alpha \leq t+\alpha < (i+1)\alpha
            \end{cases} \\
        &= \begin{cases}
            \gamma_{x_i}(t-(i-1)\alpha), & (i-1)\alpha \leq t < i\alpha
            \end{cases} \\
        &= \begin{cases}
            \gamma_{x_{i+1}}(t-i\alpha), & i\alpha \leq t < (i+1)\alpha
            \end{cases} \\
        &= \gamma_{\sigma(x)}(t).
    \end{align*} 
This shows that $\wt \Phi^\alpha: \wt\Gamma_p \to \wt \Gamma_p$ is a subsystem of $\wt \Phi^\alpha: \wt M \to \wt M$.

\subsection{Entropy calculation}

We are ready to calculate the entropy. Let us define the constant
    \[
    \mu := \sup_{0 \leq t < \alpha} d(\gamma_0(t),\gamma_1(t)),
    \]
which is the maximum distance points of $\gamma_0$ and $\gamma_1$ may be from each other at the same time. Since $\gamma_0$ and $\gamma_1$ are not equal for all times, $\mu>0$. This will be used in the estimates that follow.

Also, we define the quantity
    \[
    N(\gamma_x, \gamma_{x'}) := N(x,x') := \min\set{\abs{i}}{x_i \neq x'_i}.
    \]
between two orbits $\gamma_x$ and $\gamma_{x'}$. The quantity $N(x,x')$ is related to the distance function for the symbolic space $S^\Z$, and $N(\gamma_x, \gamma_{x'})$ serves a similar purpose in helping to estimate orbital distances, as the next lemma shows.

\begin{lemma}
\label{lem:bounded.N.implies.distant.orbits}
Let $m \in \N$ and $x, x' \in S^\Z$. If $N(\gamma_x, \gamma_{x'}) \leq m$, then
    \[
    d_{\sup}(\gamma_x, \gamma_{x'}) \geq \mu2^{-(m+1)\alpha}.
    \]
\end{lemma}
\begin{proof}
Denote $N := N(\gamma_x, \gamma_{x'})$. By definition of $N$, the orbits $\gamma_x$ and $\gamma_{x'}$ are different on the interval $\left[ N\alpha, (N+1)\alpha \right[$ or on the interval $\left[ -N\alpha, -(N-1)\alpha \right[$. Denote $u_i := \sup_{i \leq t < i+1} d(\gamma_x(t), \gamma_{x'}(t))$. Then $u_j = \mu$ for some $j \in \N$ such that $\lfloor N\alpha \rfloor \leq j < \lfloor (N+1)\alpha \rfloor$, so
    \[
    d_{\sup}(\gamma_x, \gamma_{x'}) = \sum_{i \in \Z} 2^{-\abs{i}} u_i \geq \sum_{i = \lfloor N\alpha \rfloor}^{\lfloor (N+1)\alpha \rfloor} 2^{-i} u_i \geq 2^{-j} \mu \geq 2^{-(m+1)\alpha}\mu.
    \qedhere
    \]
\end{proof}

Finally, we calculate the entropy of the subsystem $\wt \Phi^\alpha: \wt\Gamma_p \to \wt \Gamma_p$.

\begin{proposition}
\label{prop:topological.entropy.estimate.of.the.flow}
$h(\wt\Phi^\alpha|_{\wt\Gamma_p}) \geq \log(2)$.
\end{proposition}
\begin{proof}
To simplify notation, we will just write $\wt\Phi^\alpha$ instead of $\wt\Phi^\alpha|_{\wt\Gamma_p}$ inside this proof. We will first show that $g^n(\wt\Phi^\alpha, \mu2^{-(m+1)\alpha}) \geq 2^{2m+n}$. Let $E \subseteq \wt\Gamma_p$ be a set of orbits $\gamma_y$ such that $\# E \leq 2^{2m+n}-1$. Consider the set of $(2m+n)$-tuples
	\[
	F := \set{(y_{-m}, \ldots, y_{m+n-1}) \in S^{2m+n}}{\gamma_y \in E}.
	\]
Since there are at most $2^{2m+n}-1$ elements in $E$, then $\# F \leq 2^{2m+n}-1$; and since $\#(S^{2m+n}) = 2^{2m+n}$, there is at least one $(2m+n)$-tuple $(s_{-m}, \ldots, s_{m+n-1}) \in S^{2m+n} \setminus F$. Take a sequence $x \in S^\Z$ such that
	\[
	(x_{-m}, \ldots, x_{m+n-1}) = (s_{-m}, \ldots, s_{m+n-1}).
	\]
By the choice of the $s_i$, it follows that, for every $\gamma_y \in E$, $(y_{-m}, \ldots, y_{m+n-1}) \neq (x_{-m}, \ldots, x_{m+n-1})$. So there exist $l_y \in \{-m, \ldots,m\}$ and $k_y \in \{0,\ldots,n-1\}$ such that $y_{l_y+k_y} \neq x_{l_y+k_y}$, which is the same as $\sigma^{k_y}(y)_{l_y} \neq \sigma^{k_y}(x)_{l_y}$. Since $\abs{l_y} \leq m$, this implies that $N(\wt\Phi^{\alpha k_y}(\gamma_y), \wt\Phi^{\alpha k_y}(\gamma_x)) = N(\sigma^{k_y}(y),\sigma^{k_y}(x)) \leq m$, so it follows from \cref{lem:bounded.N.implies.distant.orbits} that
	\[
	{d_{\sup}}^n_{\wt\Phi^\alpha}(\gamma_y, \gamma_x) \geq d_{\sup}(\wt\Phi^{\alpha k_y}(\gamma_y), \wt\Phi^{\alpha k_y}(\gamma_x)) \geq \mu 2^{-(m+1)\alpha};
	\]
that is, $\gamma_x \notin B^n_{\wt\Phi^\alpha}(\gamma_y, \mu2^{-(m+1)\alpha})$. Since this is valid for every $\gamma_y \in E$, we conclude that $\gamma_x \notin \bigcup_{\gamma_y \in E} B^n_{\wt\Phi^\alpha}(\gamma_y, \mu2^{-(m+1)\alpha})$ and, because $E$ is an arbitrary set with $\# E \leq 2^{2m+n}-1$, it follows that $g^n(\wt\Phi^\alpha, \mu2^{-(m+1)\alpha}) > 2^{2m+n}-1$.

Finally, to estimate the entropy we note that, since $\mu2^{-(m+1)\alpha} \to 0$ as $m \to \infty$, and
    \[
    \lim_{m \to \infty} \limsup_{n \to \infty} \frac{1}{n} \log 2^{2m+n} = \lim_{m \to \infty} \limsup_{n \to \infty} \frac{2m+n}{n} \log(2) = \log(2),
    \]
so it follows from $g^n(\wt\Phi^\alpha, \mu2^{-(m+1)\alpha}) \geq 2^{2m+n}$ that $h(\wt\Phi^\alpha) \geq \log(2)$.
\end{proof}

We can now finish the proof of the main entropy theorem.

\begin{proof}[Proof of \cref{theo:entropy}]
From \cref{lemma:distinct_trajectories_with_same_period} we know that our Filippov system $Z$ has two periodic orbit segments $\gamma_0$ and $\gamma_1$, so the construction of the $\wt \Gamma_p$ can be done. From \cref{prop:topological.entropy.estimate.of.the.flow}, we have that $h(\wt\Phi^\alpha|_{\wt\Gamma_p}) \geq \log(2)$. Since $\wt \Phi^\alpha: \wt\Gamma_p \to \wt \Gamma_p$ is a subsystem of $\wt \Phi^\alpha: \wt M \to \wt M$, this means that $h(\wt\Phi^\alpha) \geq h(\wt\Phi^\alpha|_{\wt\Gamma_p})$. From the fact that $h(\wt\Phi^\alpha) = \alpha h(\wt\Phi^1)$ (the exponent property of entropy), it follows that
    \[
    h(Z) = h(\wt \Phi^1) = \alpha\inv h(\wt \Phi^\alpha) \geq \alpha\inv \log(2) > 0.
    \qedhere
    \]
\end{proof}


\section*{Acknowledgments}

The authors would like to thank Professor Marco A. Teixeira for useful conversations concerning this work.
R.E. was partially supported by Conselho Nacional de Desenvolvimento Cient\'ifico e Tecnol\'ogico (CNPq) (grants 402060/2022-9 and 308652/2022-3).
P.M. was partially financed by the Coordenação de Aperfeiçoamento de Pessoal de Nível Superior do Brasil (CAPES) (grant 141401/2020-6).
R.V. was partially supported by Conselho Nacional de Desenvolvimento Cient\'ifico e Tecnol\'ogico (CNPq) (grants 313947/2020-1 and 314978/2023-2), and partially supported by Fundação de Amparo à Pesquisa do Estado de São Paulo (FAPESP) (grants 17/06463-3 and 18/13481-0).

\sloppy
\printbibliography
 
\end{document}